\documentclass[11pt]{article}

\usepackage[margin=1in]{geometry}
\usepackage[T1]{fontenc}
\usepackage[utf8]{inputenc}
\usepackage{lmodern}
\usepackage{microtype}
\usepackage{amsmath,amssymb,amsthm,mathtools}
\usepackage{bm}
\usepackage{enumitem}
\usepackage[hidelinks]{hyperref}
\usepackage[nameinlink,noabbrev]{cleveref}

\hypersetup{
  pdftitle={Risk-Constrained Kelly for Mutually Exclusive Outcomes},
  pdfauthor={Christopher D. Long}
}

\newtheorem{theorem}{Theorem}[section]
\newtheorem{proposition}[theorem]{Proposition}
\newtheorem{lemma}[theorem]{Lemma}
\newtheorem{corollary}[theorem]{Corollary}
\theoremstyle{definition}
\newtheorem{definition}[theorem]{Definition}
\newtheorem{example}[theorem]{Example}
\theoremstyle{remark}
\newtheorem{remark}[theorem]{Remark}

\newcommand{\R}{\mathbb{R}}

\title{Risk-Constrained Kelly for Mutually Exclusive Outcomes:\
CRRA Support Invariance and Logarithmic One-Dimensional Calibration}
\author{Christopher D. Long\\Headlamp Software\\\texttt{galizur@gmail.com}}
\date{}

\begin{document}
\maketitle

\begin{abstract}
We study the finite mutually exclusive outcome version of risk-constrained Kelly optimization with explicit state prices. The market has outcome probabilities $p_i>0$, state prices $q_i>0$, terminal wealths $W_i=c+x_i/q_i$, and a drawdown-surrogate constraint
\[
\sum_{i=1}^n p_i W_i^{-\lambda}\le 1,\qquad \lambda>0.
\]
For constant relative risk aversion utility, we work primarily in the standard overround regime $\sum_i q_i>1$, where every optimizer is necessarily non-full-support. Under the usual unique likelihood-ratio prefix hypothesis for the unconstrained problem, we prove that the constrained optimizer has exactly the same active set. Thus, in the regime where the prefix theorem is meaningful, the risk constraint deforms the funded wealth profile but does not change the active set. The support is therefore invariant across both the CRRA parameter and the drawdown-surrogate parameter.

We then isolate the logarithmic case $\gamma=1$. Once the common active prefix is known, the constrained problem reduces to a one-dimensional outer calibration together with independent one-dimensional inner equations on the active states. In this case we prove existence, uniqueness, and monotonicity for the inner solves, derive a complete calibration theorem, and record the resulting structured algorithm. We treat the fair and subfair regimes only as boundary cases: full-support phenomena can occur there, so the overround prefix theory no longer yields a parallel exact description of comparable sharpness. A numerical example illustrates how the risk constraint alters the funded wealth profile while leaving support unchanged.
\end{abstract}

\section{Introduction}

Risk-constrained Kelly gambling is naturally formulated as a finite-state utility maximization problem with an additional downside-risk constraint. In the general return-vector setting, Busseti, Ryu, and Boyd introduced a tractable convex surrogate for drawdown control and analyzed the resulting growth-risk tradeoff \cite{BussetiRyuBoyd2016}. Their framework is deliberately broad: one optimizes over a generic vector of bet fractions against an arbitrary nonnegative return vector. That breadth comes at a structural price. In the mutually exclusive finite-state market considered here, the state-price/wealth-profile coordinates
\[
W_i=c+\frac{x_i}{q_i},
\qquad
L_i=\frac{p_i}{q_i},
\]
reveal exact support geometry that is invisible in a generic return-vector formulation.

The key technical point is that the likelihood ratios $L_i$ act as sufficient statistics for support selection. In these coordinates, the unconstrained problem exhibits the familiar prefix structure from ordinary Kelly gambling \cite{Kelly1956}, and the constrained first-order system factors statewise through the same likelihood-ratio scores. This makes it possible to prove a theorem that has no direct analogue in the generic convex formulation: in the overround non-full-support regime, the drawdown-surrogate constraint preserves the unconstrained active set and changes only the funded wealth profile. The sharp support results of the paper live entirely in this regime; the fair and subfair cases are discussed separately only as boundary regimes where full support may occur.

This paper is part of a broader state-price/wealth-profile program for gambling and related finite-state allocation problems. In \cite{Long2026Implicit}, the single-event multinomial Kelly problem is solved directly in wealth-profile coordinates via implicit state positions. In \cite{Long2026Support}, support selection and eventwise decoupling are analyzed for simultaneous independent multi-outcome bets. The present paper applies the same methodology to a constrained one-event problem.

Our main contributions are these.
\begin{enumerate}[label=\textnormal{(\roman*)},leftmargin=*]
\item In the standard bookmaker overround regime $\sum_i q_i>1$, and under the usual unique likelihood-ratio prefix condition $L_{k_*}>\tau_{k_*}\ge L_{k_*+1}$, we extend the support analysis from logarithmic utility to the full CRRA family: the constrained optimizer has the same likelihood-ratio prefix support as the unconstrained problem.
\item We isolate the precise mechanism behind support invariance: the active-state first-order equation factors through a statewise map that remains strictly decreasing, so the risk constraint affects only multiplier calibration on a support already determined by the unconstrained problem.
\item We then specialize to logarithmic utility and derive a complete one-dimensional outer calibration theorem, together with existence, uniqueness, and monotonicity of the active-state inner equations. The logarithmic case is isolated because it yields the cleanest explicit calibration theory on the common support.
\item We clarify the role of the fair and subfair regimes. Full support can occur only when $\sum_i q_i\le 1$; in those boundary regimes we record benchmark structural observations, but we do not claim a parallel exact support-and-calibration theory of the same sharpness as in the overround case.
\end{enumerate}

The paper is self-contained. \Cref{sec:model} sets up the model and records a preliminary full-support obstruction. \Cref{sec:unconstrained-crra} proves the unconstrained CRRA prefix theorem in the overround regime. \Cref{sec:constrained-crra} proves the constrained CRRA support-invariance theorem and states a general penalty principle. \Cref{sec:log} specializes to logarithmic utility and develops the one-dimensional calibration theory on the common overround support. \Cref{sec:full-support} records fair and subfair boundary observations, and \cref{sec:example} gives a numerical example.

\section{Model and preliminary observations}
\label{sec:model}

Fix $n\ge 2$. Let $p_i>0$ be subjective probabilities with $\sum_{i=1}^n p_i=1$, and let $q_i>0$ be state prices. One unit of claim $i$ costs $q_i$ and pays $1$ only in outcome $i$.

The decision variables are retained cash $c\ge 0$ and state-claim expenditures $x_i\ge 0$, subject to the budget constraint
\[
c+\sum_{i=1}^n x_i=1.
\]
Terminal wealth in outcome $i$ is
\[
W_i=c+\frac{x_i}{q_i}.
\]

Fix a CRRA parameter $\gamma>0$ and a risk parameter $\lambda>0$. We consider the risk-constrained program
\begin{equation}
\label{eq:primal}
\begin{aligned}
\max_{c,x}\quad & \sum_{i=1}^n p_i U_\gamma\!\left(c+\frac{x_i}{q_i}\right) \\
\text{s.t.}\quad & c+\sum_{i=1}^n x_i=1, \\
& c\ge 0,\quad x_i\ge 0\quad (1\le i\le n), \\
& \sum_{i=1}^n p_i\left(c+\frac{x_i}{q_i}\right)^{-\lambda}\le 1,
\end{aligned}
\end{equation}
where
\[
U_\gamma(w):=
\begin{cases}
\dfrac{w^{1-\gamma}}{1-\gamma}, & \gamma\neq 1,\\[6pt]
\log w, & \gamma=1.
\end{cases}
\]

Because $U_\gamma$ is strictly concave on $(0,\infty)$ and the feasible set is convex, the optimal terminal wealth vector is unique.

\begin{definition}[Likelihood-ratio scores]
Define
\[
L_i:=\frac{p_i}{q_i},\qquad 1\le i\le n,
\]
and relabel indices so that
\[
L_1\ge L_2\ge \cdots \ge L_n.
\]
For $k\in\{1,\dots,n\}$ write
\[
P_k:=\sum_{i=1}^k p_i,
\qquad
Q_k:=\sum_{i=1}^k q_i,
\]
and, whenever $Q_k\neq 1$,
\[
\tau_k:=\frac{1-P_k}{1-Q_k}.
\]
\end{definition}

The first structural observation isolates the regime in which full support is even possible.

\begin{proposition}[Full support requires a fair or subfair market]
\label{prop:full-support-obstruction}
Consider either the unconstrained CRRA problem obtained by removing the risk constraint from \eqref{eq:primal}, or the constrained problem \eqref{eq:primal}. If an optimizer satisfies $x_i>0$ for every $i$, then
\[
\sum_{i=1}^n q_i\le 1.
\]
If, in addition, $c>0$, then in fact $\sum_{i=1}^n q_i=1$.

In particular, in the overround regime
\[
Q_n:=\sum_{i=1}^n q_i>1,
\]
no optimizer can have full claim support.
\end{proposition}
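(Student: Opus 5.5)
The plan is an improving-perturbation argument that uses only feasibility, not first-order conditions. The idea is to "unwind" a uniform amount of every claim into cash. Since $\sum_i q_i$ units of cash buy one unit of each claim, a full set of claims is worth $Q_n$ in cash. When $Q_n>1$, selling a full set for cash raises every $W_i$ simultaneously. That contradicts optimality in both the constrained and unconstrained problems: $U_\gamma$ is strictly increasing, and the risk functional $\sum_i p_i W_i^{-\lambda}$ is strictly decreasing in each $W_i$, so feasibility is preserved.

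\textbf{First claim.} Suppose $x_i>0$ for all $i$ and $Q_n>1$. Choose $\varepsilon>0$ with $\varepsilon q_i\le x_i$ for every $i$, and set
\[
x_i'=x_i-\varepsilon q_i,\qquad c'=c+\varepsilon Q_n .
\]
The budget is preserved, because the total spent on claims falls by $\varepsilon Q_n$ and cash rises by the same amount. Nonnegativity holds by the choice of $\varepsilon$. The new wealths are
\[
W_i'=c'+\frac{x_i'}{q_i}=W_i+\varepsilon(Q_n-1)>W_i
\]
for every $i$. Expected utility therefore strictly increases, and the risk constraint, if present, remains satisfied. This contradicts optimality, so $Q_n\le 1$.

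\textbf{Second claim.} Now assume in addition $c>0$, and suppose $Q_n<1$. Run the reverse perturbation: use cash $\varepsilon Q_n\le c$ to buy one more full set, setting
\[
x_i'=x_i+\varepsilon q_i,\qquad c'=c-\varepsilon Q_n .
\]
The new wealths are $W_i'=W_i+\varepsilon(1-Q_n)>W_i$, which is again a strict improvement that preserves feasibility. Combined with the first claim, this forces $Q_n=1$. The final sentence about the overround regime is then just the contrapositive of the first claim.

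\textbf{Main obstacle.} The only delicate point is making sure the perturbation stays feasible. This needs three checks:
\begin{itemize}
\item Nonnegativity of $x_i'$, or of $c'$ in the second claim. This is exactly where the hypotheses $x_i>0$ and $c>0$ enter.
\item Preservation of the risk inequality. This holds by monotonicity, since each $W_i^{-\lambda}$ decreases.
\item Positivity of the wealths, so that $U_\gamma$ is defined. This holds because wealths only increase.
\end{itemize}
The strict improvement uses that $U_\gamma$ is strictly increasing and $p_i>0$. Because no multipliers are needed, there are no constraint-qualification issues, and the argument covers the constrained and unconstrained problems uniformly.
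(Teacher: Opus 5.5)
Your proof is correct, and it takes a genuinely different route from the paper's.

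\textbf{The paper's route.} The paper argues through the KKT system. It multiplies the full-support stationarity equations $L_i\bigl(W_i^{-\gamma}+\eta\lambda W_i^{-(\lambda+1)}\bigr)=\nu$ by $q_i$, sums, and compares the result with the cash stationarity equation to get $\nu Q_n=\nu-\rho$. It then concludes from $\nu>0$, and, when $c>0$, from $\rho=0$ by complementary slackness.

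\textbf{Your route.} You give a primal dominance (no-arbitrage) argument instead. A full set of claims costs $Q_n$ and pays $1$ in every state. Selling $\varepsilon$ full sets for cash therefore raises every $W_i$ by $\varepsilon(Q_n-1)$, and buying them with cash raises every $W_i$ by $\varepsilon(1-Q_n)$. Monotonicity of $U_\gamma$ and of the risk functional then contradicts optimality.

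\textbf{What your route buys.}
\begin{itemize}
\item It needs no multipliers, hence no constraint qualification. The paper's proof takes the existence of KKT multipliers for granted. Slater's condition is only checked later, and only in the overround regime with some $L_i>1$, whereas the proposition is stated for arbitrary markets.
\item It applies verbatim to any strictly increasing utility and any risk functional that is nonincreasing in each $W_i$, with no differentiability required.
\item Your second perturbation never uses $x_i>0$. It therefore proves the stronger fact that in a subfair market every optimizer, full-support or not, has $c=0$. This sharpens the paper's subfair remark.
\end{itemize}

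\textbf{What the paper's route buys.} It keeps continuity with the rest of the paper. Summing the active equations with weights $q_i$ and subtracting from the cash equation is exactly the device that later produces the threshold $\tau_A=(1-P_A)/(1-Q_A)$ in the prefix and support-invariance theorems. It also yields the explicit multiplier relation $\rho=\nu(1-Q_n)$ for full-support optimizers.
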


\begin{proof}
The constrained and unconstrained cases are proved identically; the latter is obtained by setting the multiplier of the risk constraint equal to zero.

Let $\eta\ge 0$ denote the multiplier of the risk constraint, with $\eta=0$ in the unconstrained case. If $x_i>0$ for all $i$, then the stationarity condition with respect to $x_i$ gives
\[
\frac{p_i}{q_i}W_i^{-\gamma}+\eta\lambda \frac{p_i}{q_i}W_i^{-(\lambda+1)}=\nu,
\qquad 1\le i\le n.
\]
Multiplying by $q_i$ and summing yields
\begin{equation}
\label{eq:sum-active-full}
\sum_{i=1}^n p_i\bigl(W_i^{-\gamma}+\eta\lambda W_i^{-(\lambda+1)}\bigr)=\nu\sum_{i=1}^n q_i.
\end{equation}
On the other hand, stationarity with respect to $c$ gives
\[
\sum_{i=1}^n p_i\bigl(W_i^{-\gamma}+\eta\lambda W_i^{-(\lambda+1)}\bigr)-\nu+\rho=0,
\]
where $\rho\ge 0$ is the multiplier of the cash nonnegativity constraint. Hence
\begin{equation}
\label{eq:sum-cash-full}
\sum_{i=1}^n p_i\bigl(W_i^{-\gamma}+\eta\lambda W_i^{-(\lambda+1)}\bigr)=\nu-\rho.
\end{equation}
Comparing \eqref{eq:sum-active-full} and \eqref{eq:sum-cash-full} gives
\[
\nu\sum_{i=1}^n q_i=\nu-\rho\le \nu.
\]
Since $\nu>0$ by the active-state first-order condition, we conclude that $\sum_i q_i\le 1$. If $c>0$, then complementary slackness gives $\rho=0$, so equality holds.
\end{proof}

\begin{corollary}[Overround reduction to the non-full-support regime]
\label{cor:overround}
Assume $Q_n>1$. Then every optimizer, constrained or unconstrained, has a nonempty inactive set and strictly positive cash level.
\end{corollary}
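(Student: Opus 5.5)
Two assertions are needed when $Q_n>1$: (a) some $x_i=0$, and (b) $c>0$. Part (a) is immediate from \Cref{prop:full-support-obstruction}. If $x_i>0$ for every $i$, that proposition gives $\sum_i q_i\le 1$, which contradicts $Q_n>1$. So every optimizer, constrained or unconstrained, has a nonempty inactive set. Before invoking the first-order conditions, I will check that KKT multipliers exist. The objective forces $W_i>0$ for all $i$ (otherwise utility is $-\infty$ when $\gamma\ge 1$, and for $\gamma<1$ the infinite marginal utility at $0$ rules it out). For Slater's condition in the constrained case, the all-cash point $c=1$ gives $\sum_i p_i W_i^{-\lambda}=1$, which is not strict. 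So I will argue that the KKT system used in the proposition is valid at the optimum, or simply take it as given as the proposition's proof does.

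For part (b), I argue by contradiction. Suppose $c=0$. Since $W_i=c+x_i/q_i>0$ is needed for finiteness of the objective (for $\gamma\ge1$) and for feasibility of the risk constraint $\sum_i p_iW_i^{-\lambda}\le 1<\infty$ (for every $\gamma$), every $x_i$ must be positive. This contradicts part (a). In the constrained problem, the risk constraint alone forces $W_i>0$ for all $i$. In the unconstrained problem with $\gamma<1$, I need a separate argument that $W_i>0$. Suppose $W_j=0$ at an optimizer. Shifting a small amount $\varepsilon$ of budget from some funded claim into claim $j$ gains utility of order $p_j(\varepsilon/q_j)^{1-\gamma}/(1-\gamma)$, which is infinitely steep. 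The loss is only $O(\varepsilon)$, because the funded state has positive wealth. This contradicts optimality, so all $W_i>0$, hence all $x_i>0$ when $c=0$, contradicting (a) again.

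The main obstacle is the rigour of this boundary-wealth step, especially for $\gamma<1$ unconstrained, where zero wealth is not excluded by the objective itself. The marginal-swap argument above handles it. Everything else reduces directly to \Cref{prop:full-support-obstruction}.
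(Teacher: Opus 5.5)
Your proof is correct and follows the paper's route. Part (a) is read off from \cref{prop:full-support-obstruction}, and part (b) is the contrapositive of the paper's observation that an inactive state $j$ has $W_j=c$; you additionally justify the positivity $W_j>0$ that the paper uses implicitly, via the risk constraint, via the objective being $-\infty$ when $\gamma\ge 1$, and via your marginal-swap argument for the unconstrained case with $\gamma<1$.

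Your concern about Slater is not needed for the corollary, since the corollary may simply cite the proposition. It can also be bypassed altogether: if every $x_i>0$, moving $\delta q_i$ from each claim into cash preserves the budget and raises every $W_i$ by $\delta(Q_n-1)>0$, so a full-support point is strictly dominated without any KKT multipliers.
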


\begin{proof}
The first statement follows immediately from \cref{prop:full-support-obstruction}. If some state is inactive, then $x_j=0$ for at least one $j$, and therefore $W_j=c>0$.
\end{proof}

\begin{remark}[Constraint qualification in the main overround regime]
\label{rem:slater}
In the overround/prefix regime treated in the main support and calibration theorems, Slater's condition holds for \eqref{eq:primal}. The no-bet point $(c,x)=(1,0)$ satisfies the risk constraint with equality. If $L_i>1$ for some state $i$ (and in particular for $i=1$ under \eqref{eq:prefix-condition}), then for sufficiently small $\varepsilon>0$ the perturbation
\[
c=1-\varepsilon,
\qquad
x_i=\varepsilon,
\qquad
x_j=0\quad (j\ne i)
\]
is strictly feasible. Indeed, the corresponding terminal wealths satisfy
\[
W_i=1+\varepsilon\Bigl(\frac{1}{q_i}-1\Bigr),
\qquad
W_j=1-\varepsilon\quad (j\ne i),
\]
and a first-order expansion gives
\[
\sum_{j=1}^n p_jW_j^{-\lambda}
=1-\lambda(L_i-1)\varepsilon+O(\varepsilon^2)<1.
\]
Thus the convex feasible set has a strictly feasible point throughout the regime relevant to the KKT analysis below.
\end{remark}

\section{Unconstrained CRRA support in the overround regime}
\label{sec:unconstrained-crra}

We now study the unconstrained problem obtained from \eqref{eq:primal} by removing the risk constraint. Throughout this section we assume the overround condition $Q_n>1$, so that the optimizer is automatically non-full-support by \cref{cor:overround}.

\begin{proposition}[CRRA prefix theorem]
\label{prop:crra-prefix}
Assume $Q_n>1$ and suppose there exists a unique index
\[
k_*\in\{1,\dots,n-1\}
\]
such that
\begin{equation}
\label{eq:prefix-condition}
L_{k_*}>\tau_{k_*}\ge L_{k_*+1}.
\end{equation}
Then, for every $\gamma>0$, the unconstrained CRRA optimizer has active set
\[
A_*:=\{1,\dots,k_*\}.
\]
In particular, the unconstrained active set is independent of $\gamma$.

More precisely, the optimal terminal wealth vector has the form
\[
W_i^{\mathrm{ucrra}}=
\begin{cases}
c_*\bigl(L_i/\tau_*\bigr)^{1/\gamma}, & i\le k_*,\\[2mm]
c_*, & i>k_*,
\end{cases}
\qquad
\tau_*:=\tau_{k_*},
\]
where
\[
c_*=
\frac{1}{(1-Q_*)+\sum_{i\in A_*} q_i(L_i/\tau_*)^{1/\gamma}},
\qquad
Q_*:=Q_{k_*}.
\]
\end{proposition}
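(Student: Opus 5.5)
The plan is to verify that the displayed candidate satisfies the KKT conditions of the unconstrained problem, and then invoke sufficiency. The unconstrained problem is concave maximization over a polyhedron, so KKT conditions are sufficient. By strict concavity of $U_\gamma$ the optimal wealth vector is unique, and hence so is the active set. Here the active set is read off from $W_i>c$: the map $(c,x)\mapsto W$ is injective, because $\sum_i x_i = \sum_i q_i(W_i-c)$ and the budget constraint then determine $c$ whenever $Q_n\neq 1$. The unconstrained KKT system with multiplier $\nu$ for the budget and $\rho\ge 0$ for $c\ge0$ reads
\[
L_iW_i^{-\gamma}\le \nu \ (\text{equality if } x_i>0),\qquad \sum_i p_iW_i^{-\gamma}=\nu-\rho .
\]
By \cref{cor:overround} we have $c>0$, so we look for a solution with $\rho=0$.

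The candidate is built as follows. Set $W_i=c$ for $i>k_*$, and set $W_i=(L_i/\nu)^{1/\gamma}$ for $i\le k_*$. Substituting into the cash equation gives
\[
\sum_{i\le k_*} q_i\nu + (1-P_*)c^{-\gamma}=\nu,
\]
that is, $c^{-\gamma}=\nu(1-Q_*)/(1-P_*)=\nu/\tau_*$. This requires $Q_*<1$ and $P_*<1$, so that $\tau_*>0$. Consequently $W_i=c(L_i/\tau_*)^{1/\gamma}$ on $A_*$. The budget constraint $c+\sum_i q_i(W_i-c)=1$ then forces the stated formula for $c_*$, and its denominator is positive because $1-Q_*>0$.

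The remaining steps, in order, are these.
\begin{enumerate}
\item Check $x_i=q_i(W_i-c)>0$ for $i\le k_*$. This is equivalent to $L_i>\tau_*$, which holds because $L_i\ge L_{k_*}>\tau_*$.
\item Check the inactive inequality $L_ic^{-\gamma}\le\nu=c^{-\gamma}\tau_*$ for $i>k_*$. This is equivalent to $L_i\le\tau_*$, which follows from $L_i\le L_{k_*+1}\le\tau_*$.
\item Check $c>0$ and $\nu>0$.
\end{enumerate}
Since only the ratio $L_i/\tau_*$ enters, the resulting support is manifestly independent of $\gamma$.

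The main obstacle is establishing $Q_*<1$ (and hence $P_*<1$, $\tau_*>0$) from the prefix hypothesis, since the statement only defines $\tau_k$ when $Q_k\neq1$. First I would note that $k_*\le n-1$ gives $P_*<1$. Next, if $Q_*>1$ then $\tau_*<0$. In that case I would try to show that the inequality $\tau_*\ge L_{k_*+1}>0$ fails, which is a contradiction. If that route stalls, the fallback is to use the standard identity that $\tau_k$ is a weighted average: writing $1-P_k=\sum_{j>k}q_jL_j$ exhibits $\tau_k$ as a $q$-weighted mean of $L_{k+1},\dots,L_n$ when $Q_k<1$. The same identity shows that $\tau_*\ge L_{k_*+1}\ge L_j$ for $j>k_*$ requires $1-Q_*\le \sum_{j>k_*}q_j$, which is consistent with overround. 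With $\tau_*>0$ in hand, everything else is mechanical substitution. The uniqueness clause in the hypothesis is then not even needed beyond guaranteeing that $k_*$ is well defined, since KKT sufficiency plus uniqueness of $W$ pins down the active set.
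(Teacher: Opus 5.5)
Your proof is correct, but it runs in the opposite direction from the paper's. The paper argues by necessity. It takes an optimizer and writes the KKT conditions, which are necessary because the constraints are linear. It uses \cref{cor:overround} to get $c>0$ and $\rho=0$, and observes that the active set is $\{i:L_i>\nu c^\gamma\}$, hence a prefix $\{1,\dots,k\}$. Subtracting the $q$-weighted active equations from the cash equation gives $\nu c^\gamma=\tau_k$. It concludes that $k$ must satisfy $L_k>\tau_k\ge L_{k+1}$, and only then invokes uniqueness of $k_*$. You instead construct the explicit KKT point on $A_*$ and invoke sufficiency. You then pin down the active set by uniqueness of the optimal $W$, together with injectivity of $(c,x)\mapsto W$ when $Q_n\neq 1$. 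Read your derivation of the candidate as a definition ($c:=c_*$, $\nu:=\tau_*c_*^{-\gamma}$); since every step is reversible, the verification goes through.

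Your route gives two things the paper's does not. It proves that an optimizer exists, which the paper tacitly assumes. It also shows that the uniqueness clause on $k_*$ is automatic: any admissible $k$ would yield an optimal point with a different active set. The paper's route gives something else: it never needs to guess the answer. It is therefore exactly the template reused in \cref{thm:support}, where no explicit constrained candidate is available and a sufficiency argument would not transfer directly.

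The step you flag as the main obstacle is not really one, and your first route already settles it. Since $k_*\le n-1$ and all $p_i>0$, we have $1-P_*>0$. So $Q_*>1$ would force $\tau_*<0<L_{k_*+1}$, contradicting $\tau_*\ge L_{k_*+1}$. Hence $Q_*<1$ and $\tau_*>0$.

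Drop the fallback. In the overround regime, $\tau_k$ is not a $q$-weighted mean of $L_{k+1},\dots,L_n$; it is that mean multiplied by $(Q_n-Q_k)/(1-Q_k)>1$.
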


\begin{proof}
Let $\nu\in\R$, $\rho\ge 0$, and $\mu_i\ge 0$ be the multipliers for the budget, cash nonnegativity, and stake nonnegativity constraints. The Lagrangian is
\[
\mathcal L_0
=
\sum_{i=1}^n p_iU_\gamma(W_i)
-\nu\Bigl(c+\sum_{i=1}^n x_i-1\Bigr)
+\rho c+\sum_{i=1}^n \mu_i x_i.
\]
Stationarity with respect to $x_i$ gives
\[
\frac{p_i}{q_i}W_i^{-\gamma}-\nu+\mu_i=0,
\qquad 1\le i\le n.
\]
Hence, if $x_i>0$, then $\mu_i=0$ and
\[
L_iW_i^{-\gamma}=\nu.
\]
If $x_i=0$, then $W_i=c$ and
\[
L_ic^{-\gamma}\le \nu.
\]
By \cref{cor:overround}, the optimizer is non-full-support, so $c>0$ and therefore $\rho=0$. Stationarity with respect to $c$ gives
\[
\sum_{i=1}^n p_iW_i^{-\gamma}=\nu.
\]

The active set is an upper tail in the ordered scores $L_i$, hence a prefix $A=\{1,\dots,k\}$ for some $k\in\{1,\dots,n-1\}$. Summing the active equations with weights $q_i$ over $i\in A$ yields
\[
\sum_{i\in A}p_iW_i^{-\gamma}=\nu Q_k.
\]
Subtracting from the cash stationarity equation gives
\[
(1-P_k)c^{-\gamma}=\nu(1-Q_k),
\qquad\text{so}\qquad
\nu c^{\gamma}=\tau_k.
\]
Therefore
\[
x_i>0 \iff W_i>c \iff L_i>\nu c^\gamma=\tau_k,
\]
and inactivity is equivalent to $L_i\le \tau_k$. Thus the support condition is exactly
\[
L_k>\tau_k\ge L_{k+1}.
\]
By the uniqueness hypothesis, $k=k_*$, and therefore $A=A_*$. The active-state formula follows from
\[
W_i=\left(\frac{L_i}{\nu}\right)^{1/\gamma}
=c\left(\frac{L_i}{\tau_*}\right)^{1/\gamma},
\qquad i\in A_*.
\]
Substituting this into the budget identity
\[
(1-Q_*)c+\sum_{i\in A_*}q_iW_i=1
\]
yields the stated expression for $c_*$.
\end{proof}

\begin{corollary}[Ordinary Kelly prefix formula]
\label{cor:log-prefix}
Under the hypotheses of \cref{prop:crra-prefix}, the unconstrained logarithmic optimizer ($\gamma=1$) is
\[
W_i^{\mathrm K}=\max\{\tau_*,L_i\},
\qquad
A_*:=\{1,\dots,k_*\},
\qquad
c^{\mathrm K}=\tau_*,
\]
with stakes
\[
x_i^{\mathrm K}=
\begin{cases}
q_i(L_i-\tau_*), & i\in A_*,\\
0, & i\notin A_*.
\end{cases}
\]
\end{corollary}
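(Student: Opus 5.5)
This corollary is the specialization $\gamma=1$ of \cref{prop:crra-prefix}, so the plan is to substitute $\gamma=1$ into the explicit formulas there and simplify, using the definition of $\tau_*$ to evaluate $c_*$ in closed form. Since the hypotheses are identical, the proposition already gives active set $A_*=\{1,\dots,k_*\}$ and wealths $W_i=c_*L_i/\tau_*$ for $i\le k_*$ and $W_i=c_*$ for $i>k_*$. The optimizer is unique by strict concavity of $\log$, so it suffices to identify these quantities.

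The key step is computing $c_*$. With $\gamma=1$, the denominator in the proposition is
\[
(1-Q_*)+\sum_{i\in A_*} q_i\frac{L_i}{\tau_*}=(1-Q_*)+\frac{P_*}{\tau_*},
\]
where $P_*:=P_{k_*}$ and we use $q_iL_i=p_i$. Clearing the factor $\tau_*$ gives the quotient
\[
\frac{\tau_*(1-Q_*)+P_*}{\tau_*}.
\]
Since $Q_*\neq 1$ (implicit in the definition of $\tau_*$), we have $\tau_*(1-Q_*)=1-P_*$, so the numerator equals $1$. Hence $c_*=\tau_*$. The only point to check is that $\tau_*>0$, so that the divisions are legitimate. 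This follows from the proposition itself: $c_*>0$ by \cref{cor:overround}, and the proof of the proposition shows $\nu c^\gamma=\tau_k$ with $\nu>0$.

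Substituting $c_*=\tau_*$ gives $W_i=L_i$ for $i\in A_*$ and $W_i=\tau_*$ otherwise. The prefix condition \eqref{eq:prefix-condition}, together with the ordering of the $L_i$, says $L_i>\tau_*$ for $i\le k_*$ and $L_i\le\tau_*$ for $i>k_*$. Therefore $W_i=\max\{\tau_*,L_i\}$ in both cases.

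Finally, the stakes are recovered from $W_i=c+x_i/q_i$, that is, $x_i=q_i(W_i-c)$. This equals $q_i(L_i-\tau_*)$ on $A_*$ and $0$ off $A_*$. There is no real obstacle here; the only step requiring care is the algebraic identity $\tau_*(1-Q_*)=1-P_*$ that collapses $c_*$ to $\tau_*$.
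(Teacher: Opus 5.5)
Your proposal is correct and follows the paper's proof essentially verbatim: set $\gamma=1$ in \cref{prop:crra-prefix}, use $q_iL_i=p_i$ and $\tau_*(1-Q_*)=1-P_*$ to collapse $c_*$ to $\tau_*$, then read off the wealths and stakes. Your positivity check on $\tau_*$ can be shortened to $\tau_*\ge L_{k_*+1}>0$, which follows directly from \eqref{eq:prefix-condition}.
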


\begin{proof}
Set $\gamma=1$ in \cref{prop:crra-prefix}. Then
\[
c_*=
\frac{1}{(1-Q_*)+\sum_{i\in A_*}q_iL_i/\tau_*}
=
\frac{1}{(1-Q_*)+P_*/\tau_*}
=\tau_*,
\]
using $\tau_*=(1-P_*)/(1-Q_*)$. The remaining formulas follow immediately.
\end{proof}

\section{Risk-constrained CRRA support invariance}
\label{sec:constrained-crra}

We now return to the constrained problem \eqref{eq:primal}.

\begin{theorem}[CRRA support invariance]
\label{thm:support}
Assume $Q_n>1$ and suppose the unconstrained problem satisfies the hypotheses of \cref{prop:crra-prefix}. Then every optimizer of \eqref{eq:primal} has active set
\[
A_*:=\{1,\dots,k_*\}.
\]
Equivalently, the drawdown-surrogate constraint preserves the unconstrained CRRA active set, which is itself the ordinary Kelly prefix.

More precisely, if $A:=\{i:x_i^*>0\}$ denotes the constrained active set, then there exists a scalar
\[
\tau_A=\frac{1-P_A}{1-Q_A},
\qquad
P_A:=\sum_{i\in A}p_i,
\quad
Q_A:=\sum_{i\in A}q_i,
\]
such that
\[
A=\{i:L_i>\tau_A\},
\qquad
A^c=\{i:L_i\le \tau_A\}.
\]
Hence $A$ is a prefix in likelihood-ratio order, and by uniqueness of the unconstrained prefix necessarily $A=A_*$.
\end{theorem}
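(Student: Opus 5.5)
The plan is to repeat the KKT argument of \cref{prop:crra-prefix}, now with the risk-constraint multiplier $\eta\ge 0$ included, and to check that the active-state equation still factors through a strictly decreasing statewise map. By \cref{rem:slater} Slater's condition holds (the prefix condition gives $L_1>\tau_{k_*}$, and one checks that $\tau_{k_*}\ge 1$ in the overround regime, so $L_1>1$). Hence KKT multipliers $\nu$, $\rho\ge0$, $\mu_i\ge0$, $\eta\ge0$ exist at any optimizer. By \cref{cor:overround}, $c>0$ and some state is inactive, so $\rho=0$.

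Define the statewise map
\[
\phi(w):=w^{-\gamma}+\eta\lambda w^{-(\lambda+1)},
\]
which is strictly decreasing and continuous on $(0,\infty)$. Stationarity gives the following conditions:
\begin{itemize}
\item in $x_i$: $L_i\phi(W_i)=\nu$ if $x_i>0$, and $L_i\phi(c)\le\nu$ if $x_i=0$ (using $W_i=c$);
\item in $c$: $\sum_i p_i\phi(W_i)=\nu$.
\end{itemize}
Let $A$ be the active set. Sum the active equations with weights $q_i$ to get $\sum_{i\in A}p_i\phi(W_i)=\nu Q_A$. Subtract this from the cash equation; since inactive states have $W_i=c$, this gives
\[
(1-P_A)\phi(c)=\nu(1-Q_A).
\]
Here $A\ne\{1,\dots,n\}$, so $1-P_A>0$. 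Also $\phi(c)>0$ and $\nu>0$, so $1-Q_A>0$ (and $Q_A\ne1$), and therefore
\[
\nu/\phi(c)=\tau_A.
\]
Now read off the support. For $i\in A$ we have $W_i>c$; indeed $W_i=c+x_i/q_i>c$. Since $\phi$ is strictly decreasing, $\phi(W_i)<\phi(c)$, so $L_i=\nu/\phi(W_i)>\nu/\phi(c)=\tau_A$. For $i\notin A$ the inequality gives $L_i\le\nu/\phi(c)=\tau_A$. Hence $A=\{i:L_i>\tau_A\}$ exactly.

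This makes $A$ an upper set in the $L$-ordering. One caveat: with ties in $L$, an index set of the form $\{i:L_i>t\}$ is automatically a union of full tie classes, so it is a prefix $\{1,\dots,k\}$ under the given relabeling. Also $A$ is nonempty: if $A=\emptyset$, then $W\equiv c=1$, but this is not optimal because of the strictly feasible improving direction of \cref{rem:slater}, since $L_1>1$ gives a first-order gain $p_1(L_1-1)\varepsilon>0$ in utility. So $k\in\{1,\dots,n-1\}$ satisfies $L_k>\tau_k\ge L_{k+1}$, and the uniqueness hypothesis of \cref{prop:crra-prefix} forces $k=k_*$.

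The main obstacle is the bookkeeping that turns the abstract KKT system into the clean scalar identity $\nu/\phi(c)=\tau_A$. This requires:
\begin{itemize}
\item guaranteeing multiplier existence, which is why Slater via \cref{rem:slater} is needed, including verifying $L_1>1$ from the prefix condition;
\item showing $\nu>0$ and $1-Q_A>0$;
\item noting that $\phi$ depends on $\eta$ but not on the state, so the single threshold $\tau_A$ is independent of $\eta$, $\gamma$ and $\lambda$.
\end{itemize}
I would check $L_1>1$ as follows. Suppose instead $L_i\le 1$ for all $i$. Then $P_k\le Q_k$ for every $k$, and since $L_{k_*}\le1$ the inequality $L_{k_*}>\tau_{k_*}$ gives $1-P_{k_*}<1-Q_{k_*}$, i.e. $P_{k_*}>Q_{k_*}$, a contradiction. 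Where needed I would also argue directly from concavity that $A=\emptyset$ is suboptimal.
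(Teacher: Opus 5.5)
Your argument is the paper's proof. It uses the same KKT system with $\rho=0$ and the same $q_i$-weighted summation of the active equations, giving $(1-P_A)\phi(c)=\nu(1-Q_A)$. It then reads off the threshold through the strictly decreasing statewise map. Your explicit exclusion of $A=\emptyset$ and your remark on ties fill in points the paper leaves tacit: its ``$\nu>0$ by the active equation'' and its appeal to uniqueness over $k\in\{1,\dots,n-1\}$ both assume $A\neq\emptyset$.

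One correction. The assertion that $\tau_{k_*}\ge 1$ in the overround regime is false. In fact $\tau_{k_*}=c^{\mathrm K}<1$: adding $P_{k_*}>\tau_{k_*}Q_{k_*}$ to $1-P_{k_*}=\tau_{k_*}(1-Q_{k_*})$ gives $1>\tau_{k_*}$, and the paper's example has $\tau_1\approx 0.909$. Drop that sentence and rely on your closing contradiction argument for $L_1>1$. That argument is correct once you record $1-Q_{k_*}>0$, which follows from $\tau_{k_*}\ge L_{k_*+1}>0$ and $P_{k_*}<1$.

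A minor slip: the first-order utility gain of the perturbation from the no-bet point is $(L_1-1)\varepsilon$, not $p_1(L_1-1)\varepsilon$. Only its sign matters, so the conclusion stands.
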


\begin{proof}
By \cref{cor:overround}, every optimizer is non-full-support, hence $c^*>0$. Let $\eta\ge 0$, $\nu\in\R$, $\rho\ge 0$, and $\mu_i\ge 0$ be the KKT multipliers for the risk constraint, budget constraint, cash nonnegativity, and stake nonnegativity constraints. The Lagrangian is
\[
\mathcal L
=
\sum_{i=1}^n p_iU_\gamma(W_i)
-\eta\Bigl(\sum_{i=1}^n p_iW_i^{-\lambda}-1\Bigr)
-\nu\Bigl(c+\sum_{i=1}^n x_i-1\Bigr)
+\rho c+\sum_{i=1}^n \mu_i x_i.
\]
Stationarity with respect to $x_i$ gives
\begin{equation}
\label{eq:crra-kkt-x}
L_i\Bigl(W_i^{-\gamma}+\eta\lambda W_i^{-(\lambda+1)}\Bigr)-\nu+\mu_i=0.
\end{equation}
Since $c^*>0$, complementary slackness gives $\rho=0$, and stationarity with respect to $c$ becomes
\begin{equation}
\label{eq:crra-kkt-c}
\sum_{i=1}^n p_i\Bigl(W_i^{-\gamma}+\eta\lambda W_i^{-(\lambda+1)}\Bigr)=\nu.
\end{equation}

If $i\in A$, then $x_i^*>0$, so $\mu_i=0$ and
\begin{equation}
\label{eq:crra-active-kkt}
L_i\Bigl(W_i^{-\gamma}+\eta\lambda W_i^{-(\lambda+1)}\Bigr)=\nu.
\end{equation}
If $j\notin A$, then $W_j=c$ and \eqref{eq:crra-kkt-x} gives
\begin{equation}
\label{eq:crra-inactive-kkt}
L_j\Bigl(c^{-\gamma}+\eta\lambda c^{-(\lambda+1)}\Bigr)\le \nu.
\end{equation}
Summing \eqref{eq:crra-active-kkt} over $i\in A$ with weights $q_i$ yields
\[
\sum_{i\in A} p_i\Bigl(W_i^{-\gamma}+\eta\lambda W_i^{-(\lambda+1)}\Bigr)=\nu Q_A.
\]
Subtracting from \eqref{eq:crra-kkt-c} and using $W_j=c$ for $j\notin A$ gives
\[
(1-P_A)\Bigl(c^{-\gamma}+\eta\lambda c^{-(\lambda+1)}\Bigr)=\nu(1-Q_A).
\]
The left-hand side is strictly positive, and $\nu>0$ by \eqref{eq:crra-active-kkt}, so $1-Q_A>0$. Hence $Q_A<1$ and $\tau_A$ is well-defined and strictly positive. Therefore
\begin{equation}
\label{eq:tauA-crra}
\tau_A:=\frac{\nu}{c^{-\gamma}+\eta\lambda c^{-(\lambda+1)}}=\frac{1-P_A}{1-Q_A}.
\end{equation}

Define
\[
h_\eta(w):=w^{-\gamma}+\eta\lambda w^{-(\lambda+1)}.
\]
Then
\[
h_\eta'(w)
=-\gamma w^{-(\gamma+1)}-\eta\lambda(\lambda+1)w^{-(\lambda+2)}<0,
\qquad w>0.
\]
Thus $h_\eta$ is strictly decreasing on $(0,\infty)$. If $i\in A$, then $W_i>c$, so $h_\eta(W_i)<h_\eta(c)$. Combining this with \eqref{eq:crra-active-kkt} and \eqref{eq:tauA-crra} gives
\[
L_i>\tau_A.
\]
If $j\notin A$, then \eqref{eq:crra-inactive-kkt} and \eqref{eq:tauA-crra} yield
\[
L_j\le \tau_A.
\]
Hence
\[
A=\{i:L_i>\tau_A\},
\qquad
A^c=\{i:L_i\le \tau_A\}.
\]
Therefore $A$ is a prefix, say $A=\{1,\dots,k\}$, and its cutoff equals $\tau_k$. By the uniqueness condition \eqref{eq:prefix-condition}, we must have $k=k_*$. Thus $A=A_*$.
\end{proof}

\begin{remark}[General penalty principle]
\label{rem:general-penalty}
The proof of \cref{thm:support} uses only two structural ingredients in the non-full-support regime, where inactive states all share the common wealth level $c>0$. First, the active-state first-order condition factors as
\[
L_i\,\varphi_\eta(W_i)=\nu
\]
for a scalar statewise map $\varphi_\eta$ independent of $i$. Second, for the realized multiplier $\eta\ge 0$, the map $\varphi_\eta$ is strictly decreasing on $(0,\infty)$. Accordingly, the same threshold argument applies to any risk constraint of the form
\[
\sum_{i=1}^n p_i\psi(W_i)\le \kappa
\]
whenever the first-order equation takes the form
\[
L_i\bigl(U'(W_i)-\eta\psi'(W_i)\bigr)=\nu,
\qquad i\in A,
\]
with inactive-state inequality
\[
L_j\bigl(U'(c)-\eta\psi'(c)\bigr)\le \nu,
\qquad j\notin A,
\]
and the derivative
\[
\varphi_\eta'(w)=U''(w)-\eta\psi''(w)
\]
is strictly negative on $(0,\infty)$. In that case,
\[
W_i>c \iff \varphi_\eta(W_i)<\varphi_\eta(c)
\iff L_i>\frac{\nu}{\varphi_\eta(c)},
\]
so the support is again determined by a single likelihood-ratio threshold. We state and prove the theorem for the CRRA--power-penalty pair because this is the setting in which the subsequent logarithmic calibration theory becomes completely explicit.
\end{remark}

\section{Logarithmic utility: fixed-support calibration}
\label{sec:log}

We now specialize to $\gamma=1$. The support theorem above is stated for all CRRA parameters, but the remaining solver theory is developed only in the logarithmic case. The reason is structural rather than conceptual: once the common support is known, the general-$\gamma$ problem still reduces to a one-dimensional outer calibration coupled to independent scalar inner equations, but the logarithmic case is the one in which that reduction yields the cleanest explicit formulas and calibration theorem. By \cref{thm:support}, the constrained optimizer has the same active set as ordinary Kelly, namely
\[
A_*:=\{1,\dots,k_*\}.
\]
Set
\[
P_*:=P_{k_*},
\qquad
Q_*:=Q_{k_*},
\qquad
\tau_*:=\tau_{k_*}=\frac{1-P_*}{1-Q_*},
\qquad
r_i:=\frac{L_i}{\tau_*}>1\quad (i\in A_*).
\]

The general-CRRA reduction on this fixed support takes the form
\[
r_i\bigl(z^{-\gamma}+s z^{-(\lambda+1)}\bigr)=1+s,
\qquad
s:=\eta\lambda c^{-(\lambda+1-\gamma)}.
\]
In the logarithmic case $\gamma=1$, this becomes the cleaner equation developed below. Accordingly, we restrict attention to $\gamma=1$ for the remainder of this section and for the explicit algorithmic corollaries that follow.

\begin{lemma}[One-dimensional active-state equations]
\label{lem:activeeq}
Assume $\gamma=1$. Let $(c^*,x^*)$ be a constrained optimizer. Define
\[
s:=\eta\lambda c^{-\lambda}\ge 0,
\qquad
W_i=cz_i\quad (i\in A_*).
\]
Then each active state ratio $z_i>1$ solves
\begin{equation}
\label{eq:scalar-z}
r_i\bigl(z^{-1}+s z^{-(\lambda+1)}\bigr)=1+s,
\end{equation}
or equivalently
\begin{equation}
\label{eq:poly-z}
(1+s)z^{\lambda+1}-r_i z^\lambda-r_i s=0.
\end{equation}
Conversely, if $z_i>1$ satisfies \eqref{eq:scalar-z} for each $i\in A_*$ and the budget and risk constraints hold, then the resulting point satisfies the KKT system.
\end{lemma}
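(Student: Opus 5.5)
The plan is to specialize the KKT system from the proof of \cref{thm:support} to $\gamma=1$, rescale it by the cash level, and then reverse the argument for the converse. By \cref{thm:support} the active set is $A_*$, and by \cref{cor:overround} we have $c^*>0$, so the multipliers satisfy \eqref{eq:crra-active-kkt} on $A_*$ and \eqref{eq:tauA-crra} with $A=A_*$, i.e.
\[
\nu=\tau_*\bigl(c^{-1}+\eta\lambda c^{-(\lambda+1)}\bigr),
\]
together with the inactive inequalities \eqref{eq:crra-inactive-kkt}. Slater's condition (\cref{rem:slater}) guarantees that these multipliers exist.

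For the forward direction, substitute $W_i=cz_i$ into \eqref{eq:crra-active-kkt} with $\gamma=1$. Multiplying through by $c$ gives
\[
L_i\bigl(z_i^{-1}+\eta\lambda c^{-\lambda}z_i^{-(\lambda+1)}\bigr)=\nu c.
\]
Doing the same to the expression for $\nu$ yields $\nu c=\tau_*(1+\eta\lambda c^{-\lambda})=\tau_*(1+s)$. Dividing by $\tau_*$ gives exactly \eqref{eq:scalar-z}. Multiplying by $z^{\lambda+1}$, which is positive, gives the equivalent polynomial form \eqref{eq:poly-z}. The bound $z_i>1$ is immediate, since $x_i^*>0$ forces $W_i>c$.

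For the converse, suppose $c>0$ is given, $\eta\ge 0$ defines $s$, each $z_i>1$ solves \eqref{eq:scalar-z}, and we set $W_i=cz_i$ on $A_*$ and $W_j=c$ off $A_*$, with the budget and risk constraints holding. Then:
\begin{enumerate}[label=\textnormal{(\alph*)},leftmargin=*]
\item Define $\nu:=\tau_*(1+s)/c$, with $\mu_i=0$ on $A_*$ and $\rho=0$. Reversing the scaling above gives \eqref{eq:crra-active-kkt} on $A_*$.
\item The cash stationarity \eqref{eq:crra-kkt-c} comes from summing the active equations with weights $q_i$, which gives $\sum_{A_*}p_ih_\eta(W_i)=\nu Q_*$. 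Adding the inactive contribution $(1-P_*)h_\eta(c)=\nu(1-Q_*)$, which holds by the definition of $\tau_*$ and $\nu$, gives the total $\nu$.
\item For the inactive inequalities, set $\mu_j:=\nu-L_jh_\eta(c)=h_\eta(c)(\tau_*-L_j)$. This is nonnegative because $L_j\le\tau_*$ for $j>k_*$ by \eqref{eq:prefix-condition}.
\item Feasibility $x_i=q_ic(z_i-1)>0$ holds since $z_i>1$.
\end{enumerate}

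The step I expect to be most delicate is complementary slackness for the risk constraint. The converse must be read with $\eta$ (equivalently $s$) satisfying $\eta\bigl(\sum_ip_iW_i^{-\lambda}-1\bigr)=0$. That is, either $s=0$, or the risk constraint is binding. I would state this explicitly as part of the phrase ``the risk constraint holds'', interpreting it as holding with complementary slackness. Once that is in place, concavity of the objective and convexity of the feasible set make the KKT conditions sufficient, so the point is the optimizer.
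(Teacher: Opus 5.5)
Your proposal is correct and follows essentially the same route as the paper. You specialize \eqref{eq:tauA-crra} to $\gamma=1$ to get $\nu c=\tau_*(1+s)$, substitute $W_i=cz_i$ into the active-state equation, and rescale by $c/\tau_*$; for the converse you reverse the normalization and use $L_j\le\tau_*$ for the inactive inequalities. Your explicit check of cash stationarity is a useful detail the paper leaves implicit.

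Your remark that the converse needs complementary slackness for the risk constraint (either $s=0$ or the constraint binds) is a genuine sharpening of the paper's statement. The paper's phrase ``the risk constraint holds'' omits it. By \cref{thm:solver}(c) there are always $s>0$ with $R(s)<1$, and these give feasible points satisfying \eqref{eq:scalar-z} and the budget that are not KKT points.
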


\begin{proof}
Under $\gamma=1$, equation \eqref{eq:tauA-crra} becomes
\[
\nu=\tau_*\Bigl(c^{-1}+\eta\lambda c^{-(\lambda+1)}\Bigr)=\tau_*c^{-1}(1+s).
\]
Writing $W_i=cz_i$ in the active-state equation \eqref{eq:crra-active-kkt} gives
\[
L_i\Bigl((cz_i)^{-1}+\eta\lambda(cz_i)^{-(\lambda+1)}\Bigr)=\tau_*c^{-1}(1+s).
\]
Multiplying by $c/\tau_*$ and using $r_i=L_i/\tau_*$ yields \eqref{eq:scalar-z}. Clearing denominators gives \eqref{eq:poly-z}.

Conversely, if \eqref{eq:scalar-z} holds for each $i\in A_*$, then reversing the normalization recovers the active-state KKT equations. The inactive-state inequalities follow from $L_j\le \tau_*$ and the definition of $\tau_*$. Hence the KKT system holds once the budget and risk constraints are imposed.
\end{proof}

\begin{lemma}[Existence, uniqueness, and monotonicity of the inner solves]
\label{lem:z-properties}
Assume $\gamma=1$. For each fixed $i\in A_*$ and each $s\ge 0$, equation \eqref{eq:scalar-z} has a unique solution
\[
z_i(s)\in (1,r_i].
\]
Moreover,
\begin{enumerate}[label=\textnormal{(\alph*)},leftmargin=*]
\item $s\mapsto z_i(s)$ is continuous and strictly decreasing;
\item $z_i(0)=r_i$;
\item
\[
\lim_{s\to\infty} z_i(s)=r_i^{1/(\lambda+1)}.
\]
\end{enumerate}
\end{lemma}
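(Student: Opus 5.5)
The plan is to study, for fixed $i\in A_*$ (so $r_i>1$) and $s\ge 0$, the function
\[
F(z,s):=r_i\bigl(z^{-1}+s z^{-(\lambda+1)}\bigr)-(1+s),\qquad z>0 .
\]
Since $\partial_z F=-r_i z^{-2}-(\lambda+1)r_i s z^{-(\lambda+2)}<0$, the map $z\mapsto F(z,s)$ is strictly decreasing and continuous, so any root is unique. For existence on $(1,r_i]$ I will check the endpoint signs.
\begin{itemize}
\item At $z=1$ we get $F(1,s)=r_i(1+s)-(1+s)=(r_i-1)(1+s)>0$.
\item At $z=r_i$ we get $F(r_i,s)=1+s\,r_i^{-\lambda}-1-s=s(r_i^{-\lambda}-1)\le 0$, with equality iff $s=0$.
\end{itemize}
The intermediate value theorem then gives a unique root $z_i(s)\in(1,r_i]$. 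It equals $r_i$ iff $s=0$, which proves (b).

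For (a), continuity and differentiability follow from the implicit function theorem, since $\partial_z F\neq 0$. The derivative is $z_i'(s)=-\partial_sF/\partial_zF$, where
\[
\partial_s F=r_i z^{-(\lambda+1)}-1 .
\]
At the root, write $u:=r_i z^{-(\lambda+1)}$. The equation reads $r_i z^{-1}+s u=1+s$, and since $z\le r_i$ we have $r_i z^{-1}\ge 1$, hence $su\le s$. The inequality $u\le 1$ is trivial when $s=0$, so I will argue directly instead.

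Suppose $u\ge 1$. Then $r_i\ge z^{\lambda+1}$, and combining with the equation gives $r_i z^{-1}=1+s(1-u)\le 1$, so $z\ge r_i$. Hence $z=r_i$, $s=0$, and $u=r_i^{-\lambda}<1$, a contradiction. Therefore $u<1$ always, which gives $\partial_sF<0$. Together with $\partial_zF<0$ this yields $z_i'(s)<0$, so $z_i$ is strictly decreasing.

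For (c), monotonicity and the lower bound $z_i(s)>1$ guarantee a limit $z_\infty\in[1,r_i)$. Dividing the equation by $1+s$ gives
\[
\frac{r_i z^{-1}}{1+s}+\frac{s}{1+s}\,r_i z^{-(\lambda+1)}=1 .
\]
As $s\to\infty$, with $z$ bounded in $[1,r_i]$, the first term tends to $0$ and the coefficient of the second tends to $1$. So $r_i z_\infty^{-(\lambda+1)}=1$, that is, $z_\infty=r_i^{1/(\lambda+1)}$. This value lies in $(1,r_i)$, which is consistent with the bounds.

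The main obstacle is the sign of $\partial_s F$ at the root, which the contradiction argument above handles. An alternative is to note that $r_i^{1/(\lambda+1)}$ is the root of the $s$-coefficient, $z^{\lambda+1}=r_i$, and that $z_i(s)>r_i^{1/(\lambda+1)}$ for every $s$. This follows by evaluating $F$ at $z=r_i^{1/(\lambda+1)}$: there $F=r_i^{\lambda/(\lambda+1)}-1>0$, so the root lies to the right of this point. Since $u<1$ exactly when $z>r_i^{1/(\lambda+1)}$, this also establishes the sign cleanly.
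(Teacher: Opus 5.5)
Your proof is correct and follows essentially the same route as the paper: strict monotonicity in $z$ with endpoint signs at $z=1$ and $z=r_i$, then the implicit function theorem with signed partial derivatives, then passing to the limit in the equation. The differences are cosmetic. You differentiate the rational form, so $\partial_zF<0$ holds everywhere, which sidesteps the paper's ``crosses zero from below'' remark. You also sign $\partial_sF$ by showing $z_i(s)>r_i^{1/(\lambda+1)}$, whereas the paper substitutes the root equation to get $z^{\lambda+1}-r_i=r_i(z^\lambda-1)/(1+s)>0$.
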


\begin{proof}
Fix $i\in A_*$. Define
\[
H_{r_i,s}(z):=r_i\bigl(z^{-1}+s z^{-(\lambda+1)}\bigr).
\]
For fixed $s\ge 0$, the map $z\mapsto H_{r_i,s}(z)$ is strictly decreasing on $(0,\infty)$. Moreover,
\[
H_{r_i,s}(1)=r_i(1+s)>1+s,
\qquad
H_{r_i,s}(r_i)=1+s r_i^{-\lambda}<1+s.
\]
Hence there is a unique $z_i(s)\in(1,r_i)$ satisfying \eqref{eq:scalar-z} when $s>0$, and $z_i(0)=r_i$ when $s=0$.

Let
\[
F_i(z,s):=(1+s)z^{\lambda+1}-r_i z^\lambda-r_i s.
\]
At the root $z=z_i(s)$, implicit differentiation gives
\[
z_i'(s)=-\frac{\partial_sF_i}{\partial_zF_i}.
\]
Using \eqref{eq:poly-z},
\[
\partial_sF_i=z^{\lambda+1}-r_i
=\frac{r_i(z^\lambda-1)}{1+s}>0
\]
at the root because $z_i(s)>1$. Also
\[
\partial_zF_i=(\lambda+1)(1+s)z^\lambda-\lambda r_i z^{\lambda-1}>0
\]
at the root, since $F_i(\cdot,s)$ crosses zero exactly once from below to above. Therefore $z_i'(s)<0$, and continuity follows from the implicit function theorem.

Finally, dividing \eqref{eq:scalar-z} by $s$ and letting $s\to\infty$ yields
\[
r_i z^{-(\lambda+1)}=1,
\]
so $z_i(s)\to r_i^{1/(\lambda+1)}$.
\end{proof}

For $s\ge 0$, define
\begin{equation}
\label{eq:def-B}
B(s):=1+\sum_{i\in A_*} q_i\bigl(z_i(s)-1\bigr),
\qquad
c(s):=\frac{1}{B(s)},
\end{equation}
and
\begin{equation}
\label{eq:def-R}
R(s):=c(s)^{-\lambda}\left((1-P_*)+\sum_{i\in A_*} p_i z_i(s)^{-\lambda}\right).
\end{equation}

\begin{theorem}[One-dimensional calibration theorem]
\label{thm:solver}
Assume $\gamma=1$ and the hypotheses of \cref{prop:crra-prefix}. Then:
\begin{enumerate}[label=\textnormal{(\alph*)},leftmargin=*]
\item $R:[0,\infty)\to(0,\infty)$ is continuous and strictly decreasing.
\item Its initial value is the risk functional evaluated at the ordinary Kelly solution:
\[
R(0)=\tau_*^{-\lambda}(1-P_*)+\sum_{i\in A_*} p_iL_i^{-\lambda}.
\]
\item Its limit satisfies
\[
\lim_{s\to\infty}R(s)
=\tau_*\left((1-Q_*)+\sum_{i\in A_*} q_i r_i^{1/(\lambda+1)}\right)^{\lambda+1}
< 1.
\]
\item If $R(0)\le 1$, then the ordinary Kelly solution is feasible for \eqref{eq:primal} and is therefore optimal.
\item If $R(0)>1$, then there exists a unique $s_*>0$ such that $R(s_*)=1$. The unique optimal terminal wealth vector is
\[
W_i^*=
\begin{cases}
c(s_*)z_i(s_*), & i\in A_*,\\
c(s_*), & i\notin A_*,
\end{cases}
\]
and the optimal stakes are
\[
x_i^*=
\begin{cases}
q_i c(s_*)\bigl(z_i(s_*)-1\bigr), & i\in A_*,\\
0, & i\notin A_*.
\end{cases}
\]
The associated dual parameters are
\[
\eta^*=\frac{s_*c(s_*)^\lambda}{\lambda},
\qquad
\nu^*=\tau_* c(s_*)^{-1}(1+s_*).
\]
\end{enumerate}
\end{theorem}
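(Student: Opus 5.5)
The plan is to verify the five items in order. The two substantive steps are the monotonicity in (a) and the strict inequality in (c); the rest is bookkeeping using \cref{lem:activeeq} and \cref{lem:z-properties}.

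\textbf{Interpretation of $c(s)$ and $R(s)$.} For each $s\ge 0$, place the point $W_i=c(s)z_i(s)$ on $A_*$ and $W_j=c(s)$ off $A_*$. The budget identity $(1-Q_*)c+\sum_{i\in A_*}q_iW_i=1$ is exactly $c(s)B(s)=1$. The risk functional at this point is $\sum_i p_iW_i^{-\lambda}=c(s)^{-\lambda}D(s)$, where $D(s):=(1-P_*)+\sum_{i\in A_*}p_iz_i(s)^{-\lambda}$. Hence $R(s)$ is the risk of the candidate point, and $R>0$ is clear.

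\textbf{Items (b) and (c).} For (b), use $z_i(0)=r_i$. Then $B(0)=(1-Q_*)+P_*/\tau_*=1/\tau_*$, so $c(0)=\tau_*$ and the point is the Kelly solution of \cref{cor:log-prefix}; the formula follows. For (c), pass to the limit using \cref{lem:z-properties}(c), writing $\rho_i:=r_i^{1/(\lambda+1)}$. Substituting $p_i=\tau_*q_ir_i$ gives $p_i\rho_i^{-\lambda}=\tau_*q_i\rho_i$, and $1-P_*=\tau_*(1-Q_*)$. Therefore $D(\infty)=\tau_*B(\infty)$ and $R(\infty)=\tau_*B(\infty)^{\lambda+1}$, with $B(\infty)=(1-Q_*)+\sum q_i\rho_i$. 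For the strict bound, view $B(\infty)$ as the average of $\rho$ under the probability weights $(1-Q_*,q_1,\dots,q_{k_*})$, assigning value $r=1$ to the cash slot. These weights sum to $1$, and the weighted mean of $r$ is $(1-Q_*)+P_*/\tau_*=1/\tau_*$. Strict concavity of $t\mapsto t^{1/(\lambda+1)}$ (Jensen), together with $r_i>1$ not all equal to $1$, gives $B(\infty)<\tau_*^{-1/(\lambda+1)}$, hence $R(\infty)<1$.

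\textbf{Item (a), the main obstacle.} Continuity is inherited from \cref{lem:z-properties}(a). Strict decrease is delicate, because $B$ decreases while $D$ increases as $s$ grows. I would differentiate $\log R=\lambda\log B+\log D$ to get
\[
\frac{R'}{R}=\lambda\sum_{i\in A_*}z_i'\Bigl(\frac{q_i}{B}-\frac{p_iz_i^{-\lambda-1}}{D}\Bigr).
\]
Since $z_i'<0$, it suffices to show $p_iz_i^{-\lambda-1}/D<q_i/B$ for each active $i$. I would then use \eqref{eq:scalar-z} in the form $p_iz_i^{-\lambda-1}=\tau_*q_i\bigl(1+s-r_i/z_i\bigr)/s$, which gives $p_iz_i^{-\lambda-1}<\tau_*q_i$. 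It then remains to compare $D$ with $\tau_*B$. Writing both in the cash-slot notation, $D$ and $\tau_*B$ are weighted sums of $r z^{-\lambda}$ and $z$ respectively. If this termwise comparison fails, the fallback is a penalty argument. The candidate point maximizes the Lagrangian $\sum p_i\log W_i-\eta\sum p_iW_i^{-\lambda}$ with $\eta=sc(s)^\lambda/\lambda$, and the standard penalty comparison makes the risk nonincreasing in $\eta$. One would then need to show that $\eta$ is increasing in $s$, and rule out flat pieces by uniqueness of the optimizer.

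\textbf{Items (d) and (e).} For (d), feasibility of the unconstrained optimum makes it constrained-optimal. For (e), the intermediate value theorem together with (a) and (c) gives a unique $s_*>0$ with $R(s_*)=1$. The point built from $s_*$ satisfies the budget, the risk constraint with equality, and the inner equations. By the converse part of \cref{lem:activeeq}, with $\eta^*=s_*c(s_*)^\lambda/\lambda\ge0$ (from the definition of $s$) and $\nu^*$ taken from \eqref{eq:tauA-crra}, it satisfies KKT including complementary slackness. Since the problem is convex and Slater holds by \cref{rem:slater}, the point is optimal. It is the unique optimal wealth vector by strict concavity, and the stakes are $x_i=q_i(W_i-c)$.
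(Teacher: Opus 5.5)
\textbf{Verdict.} Items (b)--(e) are correct and follow the paper. Your Jensen step with the concave map $t\mapsto t^{1/(\lambda+1)}$ is equivalent to the paper's use of the convex $t\mapsto t^{\lambda+1}$. Your KKT-sufficiency argument for (e) is more explicit than the paper's; Slater is not needed there, since KKT plus convexity already gives sufficiency.

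\textbf{The gap in (a).} Your derivative route breaks at exactly the comparison you leave open. You need $D\ge\tau_*B$, but the opposite strict inequality holds for every $s\ge 0$:
\[
D(s)-\tau_*B(s)=\tau_*\sum_{i\in A_*}q_i z_i(s)^{-\lambda}\bigl(r_i-z_i(s)^{\lambda+1}\bigr)<0 .
\]
The reason is that $z_i(s)>r_i^{1/(\lambda+1)}$: the left side of the inner equation evaluated at $z=r_i^{1/(\lambda+1)}$ equals $r_i^{\lambda/(\lambda+1)}+s>1+s$, and that left side is decreasing in $z$. Your own identity $D(\infty)=\tau_*B(\infty)$ from (c) is the limiting equality case of this inequality.

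The termwise condition itself is also not true in general. Set $a_j:=r_jz_j^{-(\lambda+1)}\in(0,1)$. Then $p_iz_i^{-\lambda-1}/D<q_i/B$ is equivalent to
\[
(1-Q_*)(1-a_i)+\sum_{j\in A_*}q_jz_j(a_j-a_i)>0 .
\]
This holds when $k_*=1$ but can fail for $k_*\ge 2$. Take $\lambda=s=1$, $r_1=10$, $r_2=1.01$, $q_1=0.05$, $q_2=0.30$, which is realizable with a unique prefix $k_*=2$. For $i=2$ the left side is about $-0.20$. So a sign argument for $R'$ would need a genuinely global estimate, which you have not supplied.

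\textbf{Closing the gap.} Your penalty fallback is therefore the proof, and it is exactly the paper's argument. To make it complete you need four steps.
\begin{enumerate}
\item For each $s$, the candidate point $(c(s),c(s)z(s))$ is the unique maximizer of $f-\eta(s)\Psi$ over the budget set, with $\eta(s)=sc(s)^\lambda/\lambda$. It satisfies the penalized stationarity system with $\nu=\tau_*c(s)^{-1}(1+s)$: the inactive inequalities reduce to $L_j\le\tau_*$, and the cash equation follows from $p_i=\tau_*q_ir_i$ and $1-P_*=\tau_*(1-Q_*)$. The penalized objective is strictly concave, which gives uniqueness.
\item $\eta(s)$ is strictly increasing, because both $s$ and $c(s)$ are.
\item Adding the two optimality inequalities at $\eta_1<\eta_2$ gives $(\eta_2-\eta_1)\bigl(R(s_1)-R(s_2)\bigr)\ge 0$.
\item If $R(s_1)=R(s_2)$, the same two inequalities force equal values of $f$. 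Then the optimizer at $\eta(s_2)$ also attains the optimum at $\eta(s_1)$, and by uniqueness the two candidate points coincide. This contradicts strict monotonicity of $c(s)$, or of each $z_i(s)$.
\end{enumerate}
The last step is slightly shorter than the paper's. The paper instead differences the first-order conditions at the common point to force $z_i=r_i^{1/(\lambda+1)}$, then contradicts the inner equation.
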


\begin{proof}
By \cref{lem:z-properties}, each $z_i(s)$ is continuous and strictly decreasing. Hence $B(s)$ in \eqref{eq:def-B} is continuous and strictly decreasing, so $c(s)$ is continuous and strictly increasing. This implies that $R$ is continuous.

At $s=0$, \cref{lem:z-properties} gives $z_i(0)=r_i=L_i/\tau_*$. Therefore
\[
B(0)=1+\sum_{i\in A_*}q_i\left(\frac{L_i}{\tau_*}-1\right)
=1+\frac{P_*}{\tau_*}-Q_*
=\frac{1}{\tau_*},
\]
so $c(0)=\tau_*$. Substituting into \eqref{eq:def-R} yields the stated formula for $R(0)$.

As $s\to\infty$, \cref{lem:z-properties} gives $z_i(s)\to r_i^{1/(\lambda+1)}$. Hence
\[
B(s)\to C:=(1-Q_*)+\sum_{i\in A_*} q_i r_i^{1/(\lambda+1)},
\qquad
c(s)\to C^{-1}.
\]
Also,
\[
(1-P_*)+\sum_{i\in A_*}p_i z_i(s)^{-\lambda}
\to
(1-P_*)+\sum_{i\in A_*} p_i r_i^{-\lambda/(\lambda+1)}.
\]
Since $p_i=\tau_* q_i r_i$ and $1-P_*=\tau_*(1-Q_*)$, the limit of the bracket equals $\tau_* C$. Therefore
\[
\lim_{s\to\infty}R(s)=\tau_* C^{\lambda+1}.
\]
Because the weights $1-Q_*$ and $q_i$ sum to $1$, and because the arguments $1$ and $r_i^{1/(\lambda+1)}$ are not all equal (indeed $r_i>1$ for every $i\in A_*$), strict Jensen for the strictly convex function $t\mapsto t^{\lambda+1}$ gives
\[
C^{\lambda+1}
< (1-Q_*)+\sum_{i\in A_*}q_i r_i
=\frac{1-P_*}{\tau_*}+\frac{P_*}{\tau_*}
=\frac{1}{\tau_*}.
\]
Therefore
\[
\lim_{s\to\infty}R(s)=\tau_* C^{\lambda+1}<1,
\]
which proves the strict limit bound.

To show monotonicity, fix the support $A_*$ and define
\[
\Psi(c,W):=(1-P_*)c^{-\lambda}+\sum_{i\in A_*}p_iW_i^{-\lambda},
\]
\[
f(c,W):=(1-P_*)\log c+\sum_{i\in A_*}p_i\log W_i.
\]
For each $\eta\ge 0$, consider the strictly concave penalized problem
\[
\max\Bigl\{f(c,W)-\eta\bigl(\Psi(c,W)-1\bigr)\Bigr\}
\]
subject to
\[
(1-Q_*)c+\sum_{i\in A_*}q_iW_i=1,
\qquad
W_i\ge c.
\]
Uniqueness of the penalized optimizer implies the standard monotonicity comparison: if $0\le \eta_1<\eta_2$ and $(c_m,W^{(m)})$ denotes the penalized optimizer at $\eta_m$, then
\[
(\eta_2-\eta_1)\bigl(\Psi(c_1,W^{(1)})-\Psi(c_2,W^{(2)})\bigr)\ge 0.
\]
Hence the optimal risk value is nonincreasing in $\eta$. Since
\[
\eta(s)=\frac{s c(s)^\lambda}{\lambda}
\]
and $c(s)$ is strictly increasing, the map $s\mapsto\eta(s)$ is strictly increasing, so $R(s)$ is nonincreasing.

Suppose now that $0\le s_1<s_2$ but $R(s_1)=R(s_2)$. Then the previous comparison is tight, so the penalized optimizer is the same point for $\eta_1:=\eta(s_1)$ and $\eta_2:=\eta(s_2)$. Subtracting the active-state first-order equations at that common optimizer gives
\[
\lambda(\eta_2-\eta_1)p_iW_i^{-(\lambda+1)}=(\nu_2-\nu_1)q_i,
\qquad i\in A_*.
\]
Subtracting the corresponding cash equations yields
\[
\lambda(\eta_2-\eta_1)(1-P_*)c^{-(\lambda+1)}=(\nu_2-\nu_1)(1-Q_*).
\]
Eliminating $\nu_2-\nu_1$ and using $\tau_*=(1-P_*)/(1-Q_*)$ gives
\[
L_iW_i^{-(\lambda+1)}=\tau_* c^{-(\lambda+1)},
\]
so
\[
\frac{W_i}{c}=\left(\frac{L_i}{\tau_*}\right)^{1/(\lambda+1)}=r_i^{1/(\lambda+1)}.
\]
Substituting $z_i=r_i^{1/(\lambda+1)}$ into \eqref{eq:scalar-z} yields
\[
r_i^{\lambda/(\lambda+1)}+s=1+s,
\]
which is impossible because $r_i>1$. Thus $R$ is strictly decreasing.

If $R(0)\le 1$, then the ordinary Kelly solution is feasible. Since the constraint only shrinks the feasible set, that unconstrained optimum remains optimal. If $R(0)>1$, continuity, strict monotonicity, and the strict limit bound imply the existence of a unique $s_*>0$ such that $R(s_*)=1$. The reconstruction formulas follow directly from the definitions of $c(s)$, $z_i(s)$, and $s$.
\end{proof}

\begin{corollary}[Structured solver for logarithmic utility]
\label{cor:algorithm}
Assume $\gamma=1$ and the hypotheses of \cref{prop:crra-prefix}. Then the constrained problem is solved by the following procedure:
\begin{enumerate}[label=\textnormal{(\arabic*)},leftmargin=*]
\item sort the likelihood-ratio scores $L_i=p_i/q_i$ in decreasing order;
\item compute the unique prefix index $k_*$ satisfying \eqref{eq:prefix-condition};
\item evaluate
\[
R(0)=\tau_*^{-\lambda}(1-P_*)+\sum_{i\in A_*} p_iL_i^{-\lambda};
\]
\item if $R(0)\le 1$, return the ordinary Kelly solution; otherwise solve the scalar equation $R(s)=1$ for $s>0$ and reconstruct $(c^*,W^*,x^*)$ from \cref{thm:solver}.
\end{enumerate}
The complexity is $O(n\log n)$ for the initial sort, followed by a one-dimensional outer root search whose function evaluations require only independent one-dimensional inner solves on the active prefix.
\end{corollary}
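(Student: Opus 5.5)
The corollary is essentially a repackaging of \cref{thm:support} and \cref{thm:solver}. The plan is therefore to check that each step of the procedure computes the objects those results refer to, and then to account for the cost.

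\textbf{Correctness.} First, sorting produces the ordering $L_1\ge\cdots\ge L_n$ used in the definition of the scores. The prefix sums $P_k,Q_k$ and cutoffs $\tau_k$ then follow by one linear pass. Under the standing hypothesis of \cref{prop:crra-prefix}, exactly one $k$ satisfies \eqref{eq:prefix-condition}, so a scan over $k=1,\dots,n-1$ testing $L_k>\tau_k\ge L_{k+1}$ (only for those $k$ with $Q_k<1$) returns $k_*$. By \cref{thm:support}, the constrained optimizer has active set $A_*=\{1,\dots,k_*\}$. This justifies restricting all subsequent work to $A_*$ with $r_i=L_i/\tau_*$.

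Step (3) evaluates $R(0)$ in the closed form of \cref{thm:solver}(b). Step (4) then splits into two cases:
\begin{itemize}
\item If $R(0)\le1$, \cref{thm:solver}(d) shows that the Kelly solution of \cref{cor:log-prefix} is optimal.
\item If $R(0)>1$, \cref{thm:solver}(a),(c),(e) give continuity, strict decrease, and $\lim_{s\to\infty}R(s)<1$. Hence a unique root $s_*>0$ exists, and it can be bracketed and located by bisection, for example by doubling an upper bound until $R<1$. The reconstruction formulas of \cref{thm:solver}(e) then yield $(c^*,W^*,x^*)$.
\end{itemize}
Uniqueness of the optimal wealth vector (strict concavity, noted in \cref{sec:model}) ensures that the returned point is \emph{the} optimizer.

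\textbf{Complexity.} The sort costs $O(n\log n)$, and the prefix sums and the scan cost $O(n)$. Each evaluation of $R(s)$ requires the values $z_i(s)$ for $i\in A_*$. By \cref{lem:z-properties}, each $z_i(s)$ is the unique root in $(1,r_i]$ of the strictly decreasing function $z\mapsto H_{r_i,s}(z)-(1+s)$, so it is computed independently by a bracketed scalar search. $B(s)$, $c(s)$, and $R(s)$ are then $O(k_*)$ sums.

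\textbf{Main obstacle.} There is no hard step, since everything is inherited from earlier results. The only point needing care is to state that the complexity claim counts outer evaluations and inner solves as oracle calls to a one-dimensional root finder to a given tolerance, rather than as exact arithmetic operations. This matches the wording of the corollary. I would also remark that the bisection bracket $[0,\bar s]$ with $R(\bar s)<1$ exists by part (c).
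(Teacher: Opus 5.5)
Your proposal is correct and follows the paper's route. The paper records this corollary as an immediate restatement of \cref{thm:solver}, which itself rests on \cref{thm:support}; your step-by-step check against parts (a)--(e), together with the explicit bracketing of the outer root (via the strict limit bound) and of each inner root in $(1,r_i]$, just spells out that restatement and the complexity count. Your side remark that only indices with $Q_k<1$ need scanning is harmless, since for $Q_k>1$ one has $\tau_k<0<L_{k+1}$, so the prefix condition cannot hold there.
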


\begin{proof}
This is an immediate restatement of \cref{thm:solver}.
\end{proof}

\begin{corollary}[The case $\lambda=1$]
\label{cor:lambda1}
Assume $\gamma=1$ and $\lambda=1$. Then
\[
R(0)=1.
\]
Consequently the ordinary Kelly solution is automatically feasible and therefore optimal.

In this case the active-state equation reduces to the quadratic
\[
(1+s)z^2-r_i z-r_i s=0,
\]
whose unique admissible root is
\[
z_i(s)=\frac{r_i+\sqrt{r_i^2+4r_i s(1+s)}}{2(1+s)}.
\]
\end{corollary}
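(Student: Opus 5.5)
The plan is to prove both assertions by direct substitution into results already established, \cref{thm:solver} and \cref{lem:z-properties}. No new analytic argument is needed; the only care required is in choosing the correct root of the quadratic.

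\emph{Step 1: $R(0)=1$.} I would start from the formula in \cref{thm:solver}(b) with $\lambda=1$:
\[
R(0)=\tau_*^{-1}(1-P_*)+\sum_{i\in A_*}p_iL_i^{-1}.
\]
Since $\tau_*=(1-P_*)/(1-Q_*)$, the first term equals $1-Q_*$. Since $p_i/L_i=q_i$, the sum equals $Q_*$. Hence $R(0)=1$. \Cref{thm:solver}(d), which applies because $R(0)\le 1$, then says that the ordinary Kelly solution of \cref{cor:log-prefix} is feasible and therefore optimal. As a sanity check, this identity is the statement $\sum_i p_i/W_i^{\mathrm K}=\sum_i q_i$ over the funded and unfunded states with $W^{\mathrm K}=\max\{\tau_*,L_i\}$, reflecting the usual Kelly identity $\mathbb E[1/W]=1$ adjusted by the cash share.

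\emph{Step 2: the quadratic.} Setting $\lambda=1$ in \eqref{eq:poly-z} gives $(1+s)z^2-r_iz-r_is=0$ immediately.

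\emph{Step 3: the admissible root.} The quadratic formula gives the roots
\[
\frac{r_i\pm\sqrt{r_i^2+4r_is(1+s)}}{2(1+s)}.
\]
The discriminant is positive because $r_i>1$ and $s\ge 0$. The product of the roots is $-r_is/(1+s)\le 0$, so the minus root is nonpositive. For $s>0$ it is strictly negative, and for $s=0$ it equals $0$. In either case it lies outside $(1,r_i]$.

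By \cref{lem:z-properties}, the equation has a unique solution in $(1,r_i]$. That solution must therefore be the plus root, which is the displayed formula. At $s=0$ this formula evaluates to $z_i(0)=r_i$, consistent with \cref{lem:z-properties}(b).

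The only point requiring attention is this root selection. In particular, at $s=0$ the negative root degenerates to $0$, which must still be excluded, and it is, since $0\notin(1,r_i]$.
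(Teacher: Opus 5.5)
Your proof is correct and is essentially the paper's argument. Step 1 is the same computation $\tau_*^{-1}(1-P_*)+\sum_{i\in A_*}p_iL_i^{-1}=(1-Q_*)+Q_*=1$ followed by \cref{thm:solver}(d), and your root selection (product of roots $-r_is/(1+s)\le 0$, plus uniqueness in $(1,r_i]$ from \cref{lem:z-properties}) is a slightly more careful version of the paper's remark that the positive root exceeds $1$. One small slip outside the proof proper: your sanity-check identity should read $\sum_i p_i/W_i^{\mathrm K}=Q_*+(1-Q_*)=1$, not $\sum_i q_i$, which here equals $Q_n>1$.
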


\begin{proof}
For $\lambda=1$, the formula in \cref{thm:solver}(b) becomes
\[
R(0)=\tau_*^{-1}(1-P_*)+\sum_{i\in A_*} p_iL_i^{-1}
=(1-Q_*)+\sum_{i\in A_*} q_i=1.
\]
Hence the first claim follows from \cref{thm:solver}(d).

The quadratic formula follows by setting $\lambda=1$ in \eqref{eq:poly-z}. The positive root is the displayed one, and it exceeds $1$ because $r_i>1$.
\end{proof}

\section{Full-support regimes}
\label{sec:full-support}

The support theory developed in the previous sections is an overround non-full-support theory. The present section records only benchmark observations for the complementary fair and subfair regimes; it does not provide a parallel exact support-and-calibration theory of comparable sharpness. By \cref{prop:full-support-obstruction}, the optimizer automatically has an inactive set when $Q_n>1$. When $Q_n\le 1$, by contrast, full-support behavior can occur.

\begin{proposition}[The fair logarithmic benchmark]
\label{prop:fair-log}
Assume $\gamma=1$ and $\sum_{i=1}^n q_i=1$.
\begin{enumerate}[label=\textnormal{(\alph*)},leftmargin=*]
\item The unconstrained logarithmic optimizer has full support and terminal wealth vector
\[
W_i^{\mathrm K}=L_i=\frac{p_i}{q_i},\qquad 1\le i\le n.
\]
\item If $0<\lambda\le 1$, then this full-support Kelly point satisfies the risk constraint and is therefore also the constrained optimizer.
\item If $\lambda>1$ and a constrained optimizer has full support, then it satisfies the interior KKT system
\[
L_i\Bigl(W_i^{-1}+\eta\lambda W_i^{-(\lambda+1)}\Bigr)=\nu,
\qquad 1\le i\le n,
\]
with
\[
\sum_{i=1}^n q_iW_i=1,
\qquad
\sum_{i=1}^n p_iW_i^{-\lambda}=1
\]
whenever $\eta>0$. In particular, the optimal wealth vector is strictly increasing in the likelihood-ratio scores $L_i$.
\end{enumerate}
\end{proposition}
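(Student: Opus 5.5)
The plan is to handle the three parts in order, reusing the KKT machinery from the proof of \cref{prop:full-support-obstruction} and \cref{thm:support}.

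\emph{Part (a).} I would exhibit the candidate point and verify optimality with a Lagrange multiplier. Take $c=0$, $x_i=p_i$. The budget holds since $\sum_i p_i=1$, and this gives $W_i=p_i/q_i=L_i>0$. For $\gamma=1$ the stationarity condition in $x_i$ reads $L_iW_i^{-1}=\nu-\mu_i$. With $\mu_i=0$ and $\nu=1$ this holds for every $i$. The cash condition requires $\sum_i p_iW_i^{-1}=\sum_i q_i=1=\nu-\rho$, so $\rho=0$. Since the problem is concave with linear constraints, KKT is sufficient, and the point is optimal. Uniqueness of the optimal wealth vector then shows $W^{\mathrm K}=L$ for every optimizer. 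Any optimizer has full support because every $W_i=L_i$ is attained, although the optimizer need not be unique in $(c,x)$ when $\sum q_i=1$. If I want full claim support literally, I would note that the optimizer $c=0$ has $x_i=p_i>0$.

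\emph{Part (b).} I need $\sum_i p_iL_i^{-\lambda}\le 1$ for $0<\lambda\le1$. Write this as $\sum_i p_i (q_i/p_i)^{\lambda}$, that is, $\mathbb E_p[Y^\lambda]$ with $Y=q_i/p_i$ and $\mathbb E_p[Y]=\sum_i q_i=1$. Since $t\mapsto t^\lambda$ is concave for $\lambda\le1$, Jensen gives $\mathbb E_p[Y^\lambda]\le(\mathbb E_pY)^\lambda=1$. The Kelly point is therefore feasible for \eqref{eq:primal}, and since the constraint only shrinks the feasible set, it stays optimal. This mirrors \cref{thm:solver}(d) and \cref{cor:lambda1}.

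\emph{Part (c).} I would apply the KKT conditions, which are necessary by Slater's condition. Some $L_i>1$ unless $p=q$, and the argument of \cref{rem:slater} applies. The degenerate case $p=q$ I would treat separately: there all $L_i=1$ and the Kelly point is $W\equiv1$, which is feasible. Full support gives $\mu_i=0$, so the displayed interior equations follow from \eqref{eq:crra-kkt-x} with $\gamma=1$. The budget identity $\sum_i q_iW_i=c\sum_i q_i+\sum_i x_i=1$ uses $\sum_i q_i=1$, and complementary slackness with $\eta>0$ makes the risk constraint active. For monotonicity, define $h_\eta(w)=w^{-1}+\eta\lambda w^{-(\lambda+1)}$, which is strictly decreasing as shown in \cref{thm:support}. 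Also $\nu>0$. Then $h_\eta(W_i)=\nu/L_i$, so $W_i=h_\eta^{-1}(\nu/L_i)$ is a strictly increasing function of $L_i$.

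The main subtlety I expect is bookkeeping rather than depth. The monotonicity claim should be read as ``$L_i>L_j\Rightarrow W_i>W_j$,'' with equal scores giving equal wealth. I would also note that the case $\eta=0$ reduces to part (a).
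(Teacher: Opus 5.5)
Your proposal is correct and follows essentially the paper's route. The paper obtains (a) from the first-order conditions $p_i/(q_iW_i)=\nu$ together with the budget $\sum_i q_iW_i=1$, and your KKT-sufficiency check at $c=0$, $x_i=p_i$ is the same computation run in the verifying direction. It proves (b) by the termwise weighted AM--GM bound $p_i^{1-\lambda}q_i^{\lambda}\le(1-\lambda)p_i+\lambda q_i$, which is equivalent to your Jensen step. It reads (c) off the full-support KKT system and the strictly decreasing map $w\mapsto w^{-1}+\eta\lambda w^{-(\lambda+1)}$; your Slater argument and the separate $p=q$ case supply justification the paper leaves implicit.

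One correction to part (a): the sentence ``any optimizer has full support'' is false in the paper's sense, namely $x_i>0$ for all $i$. The point $c=\min_i L_i$, $x_i=q_i(L_i-c)$ is also optimal and has a zero stake. So (a) should be read, as you end up doing, as the existence of the full-support optimizer $c=0$, $x_i=p_i$ together with uniqueness of $W^{\mathrm K}=L$.
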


\begin{proof}
When $\sum_i q_i=1$, the unconstrained logarithmic first-order conditions are
\[
\frac{p_i}{q_iW_i}=\nu,
\qquad 1\le i\le n.
\]
Thus $W_i=L_i/\nu$, and the budget identity $\sum_i q_iW_i=1$ forces $\nu=1$, giving $W_i^{\mathrm K}=L_i$.

At this point,
\[
\sum_{i=1}^n p_i(L_i)^{-\lambda}=
\sum_{i=1}^n p_i^{1-\lambda}q_i^{\lambda}.
\]
For $0<\lambda\le 1$, weighted AM--GM gives
\[
p_i^{1-\lambda}q_i^{\lambda}\le (1-\lambda)p_i+\lambda q_i.
\]
Summing over $i$ yields
\[
\sum_{i=1}^n p_i^{1-\lambda}q_i^{\lambda}
\le (1-\lambda)\sum_{i=1}^n p_i+\lambda\sum_{i=1}^n q_i=1,
\]
so the risk constraint is satisfied and the unconstrained optimum remains optimal.

For $\lambda>1$, the displayed interior KKT system is simply the full-support specialization of the constrained first-order conditions. The map
\[
w\mapsto w^{-1}+\eta\lambda w^{-(\lambda+1)}
\]
is strictly decreasing, so the solution must be strictly increasing in $L_i$.
\end{proof}

\begin{remark}[Subfair markets]
If $\sum_i q_i<1$, then \cref{prop:full-support-obstruction} shows that a full-support optimizer is not excluded. In that case the cash multiplier is strictly positive and any full-support optimizer must have $c=0$. This regime is atypical for bookmaker markets and plays no role in the overround support theory developed above.
\end{remark}

\section{Numerical example}
\label{sec:example}

The next example illustrates the nontrivial logarithmic calibration when $\lambda>1$.

\begin{example}[Three outcomes, one active claim]
Take
\[
(p_1,p_2,p_3)=(0.50,0.30,0.20),
\qquad
(q_1,q_2,q_3)=(0.45,0.35,0.30),
\qquad
\gamma=1,
\qquad
\lambda=2.
\]
Then $Q_n=1.10>1$, so we are in the overround regime. The likelihood-ratio scores are
\[
L=(1.111\ldots,0.857\ldots,0.667\ldots).
\]
Moreover,
\[
\tau_1=\frac{1-p_1}{1-q_1}=\frac{0.50}{0.55}=0.909\ldots,
\]
so
\[
L_1>\tau_1\ge L_2.
\]
Hence $k_*=1$ and the common unconstrained/constrained active set is $A_* = \{1\}$.

The unconstrained Kelly point is
\[
c^{\mathrm K}=\tau_1=0.9091,
\qquad
x_1^{\mathrm K}=q_1(L_1-\tau_1)=0.0909,
\qquad
x_2^{\mathrm K}=x_3^{\mathrm K}=0.
\]
The initial value of the calibration functional is
\[
R(0)=\tau_1^{-2}(1-p_1)+p_1L_1^{-2}=1.01>1,
\]
so the risk constraint is binding and the unconstrained Kelly point is no longer feasible.

Solving the scalar equation $R(s)=1$ gives
\[
s_*=0.3794\ \text{(approximately)}.
\]
The corresponding inner solution is
\[
z_1(s_*)=1.1433,
\qquad
c^*=0.9394,
\qquad
W_1^*=c^*z_1(s_*)=1.0740,
\qquad
x_1^*=q_1c^*(z_1(s_*)-1)=0.0606.
\]
Thus the support remains $\{1\}$, exactly as predicted by \cref{thm:support}, but the funded wealth profile moves toward the cash floor:
\[
(c^{\mathrm K},W_1^{\mathrm K})=(0.9091,1.1111)
\quad\longrightarrow\quad
(c^*,W_1^*)=(0.9394,1.0740).
\]
The risk constraint therefore acts through multiplier calibration on a fixed support rather than by changing the active set.
\end{example}

\section{Conclusion}

The state-price/wealth-profile formulation turns the mutually exclusive risk-constrained Kelly problem into an exact support-and-calibration problem. In the overround regime, every optimizer is automatically non-full-support. In that regime, the unconstrained CRRA optimizer has the same likelihood-ratio prefix support as ordinary Kelly, and the drawdown-surrogate constraint preserves that support. The constraint changes only the funded wealth profile.

For logarithmic utility, this structural theorem yields a complete one-dimensional calibration theory: once the common prefix is known, the constrained optimizer is recovered from a single scalar outer equation together with independent one-dimensional inner solves. This is the precise sense in which the mutually exclusive state-price problem is much more explicit than a generic finite-outcome convex program would suggest.

\end{document}